\newtheorem{theorem}{Theorem}[section]
\newtheorem{corollary}[theorem]{Corollary}
\theoremstyle{definition} 
\newtheoremstyle{named}%
  {}{}						
  {\upshape}				
  {0pt}{\bfseries}			
  {.}						
  {.5em}					
  {\thmname{#1}\thmnote{ #3}}  
\theoremstyle{named}
\newcommand{\set}[1]{\left\{{#1}\right\}} 
\newcommand{\setof}[2]{\left\{{#1}\,:\,{#2}\right\}}
\newcommand{\of}{\subseteq}
\DeclareMathOperator{\Ind}{Ind}
\DeclarePairedDelimiter{\abs}{\lvert}{\rvert}
   \def\MR#1{}
\begin{document}

\title{A note on the alternating number of independent sets in a graph}
\date{\today}
\author{Jonathan Cutler} \address{Department of Mathematics\\
Montclair State University\\
Montclair, NJ} \email{jonathan.cutler@montclair.edu} 
\author{Nathan Kahl} \address{Department of Mathematics and Computer Science\\
Seton Hall University\\
South Orange, NJ} \email{nathan.kahl@shu.edu}
\author{Phoebe Zielonka}
\address{Department of Mathematics\\
Montclair State University\\
Montclair, NJ} \email{zielonkap1@montclair.edu} 

\begin{abstract}
  The independence polynomial of a graph $G$ evaluated at $-1$, denoted here as $I(G;-1)$, has arisen in a variety of different areas of mathematics and theoretical physics as an object of interest.  Engstr\"om  used discrete Morse theory to prove that $\abs{I(G;-1)}\leq 2^{\phi(G)}$ where $\phi(G)$ is the decycling number of $G$, i.e., the minimum number of vertices needed to be deleted from $G$ so that the remaining graph is acyclic.  
	Here, we improve Engstr\"om's bound by showing $\abs{I(G;-1)}\leq 2^{\phi_3(G)}$ where $\phi_3(G)$ is the minimum number of vertices needed to be deleted from $G$ so that the resulting graph contains no induced cycles whose length is divisible by $3$.  We also note that this bound is not just sharp but that every value in the range given by the bound is attainable by some connected graph.
\end{abstract}
\maketitle

\section{Introduction}\label{sec:intro}

Let $\alpha(G)$ denote the \emph{independence number of a graph $G$}, the maximum order of an independent set of vertices in $G$.  
The \emph{independence polynomial} $I(G;x)$ of a graph $G$ is the generating function for the number of independent sets in $G$. In other words,
\[
  I(G;x)=\sum_{k=0}^{\alpha(G)} i_k(G)x^k,
\]
where $i_k(G)$ is the number of independent sets of size $k$ in $G$, so, for example, we have that $I(G;1)$ is the total number of independent sets in $G$.

The independence polynomial has been shown to have important connections with some models in theoretical physics, most prominently with hard-core models of fermions on a lattice.  In particular, in this setting $I(L;-1)$, the independence polynomial of a lattice $L$ evaluated at $-1$, corresponds (up to sign) to the Witten index of that lattice from statistical mechanics.  The quantity $I(G;-1)$ also represents important quantities in algebraic topology.  Letting $\Ind(G)$ denote the independence complex of a graph $G$, then $I(G;-1)$ gives the reduced Euler characteristic of $\Ind(G)$.  A significant interdisciplinary literature has grown up taking advantage of these connections in order to calculate the Witten index, homology groups or indeed the complete homotopy type of the independence complex for various lattices, see, e.g., \cite{a13,bmln08,fs05,fsv05,hhfs08,hs10,j06,j09} for some examples.

Returning to graph theory, the quantity $I(G;-1)$ represents the number of independent sets of $G$ of even cardinality minus the number of independent sets with odd cardinality, a quantity sometimes called the \emph{alternating number of independent sets} in a graph.  Call a cycle whose length is divisible by three a \emph{$\tilde{3}$-cycle}.  Motivated by topological and graph coloring considerations, in the early 1990's Kalai and Meshulam conjectured that the presence of induced $\tilde{3}$-cycles were necessary to produce large magnitudes of $I(G;-1)$.  Specifically, they conjectured that in any \emph{ternary graph}, that is, any graph without induced $\tilde{3}$-cycles, the magnitude of the alternating number of independent sets could be at most 1.  This conjecture was eventually proved by Chudnovsky, Scott, Seymour and Spirkl.
\begin{theorem}[Chudnovsky et al.~\cite{CSSS}]\label{thm:csss}
    If $G$ is a ternary graph, then $\abs{I(G;-1)}\leq 1$.
\end{theorem}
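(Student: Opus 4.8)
The plan is to induct on $\abs{V(G)}$ using the fundamental transfer recurrence for the independence polynomial. For any vertex $v$ one has $I(G;x)=I(G-v;x)+x\,I(G-N[v];x)$, since an independent set either avoids $v$ or consists of $v$ together with an independent set of $G-N[v]$; evaluating at $x=-1$ gives $I(G;-1)=I(G-v;-1)-I(G-N[v];-1)$. The crucial structural fact is that being ternary is hereditary: any induced cycle of $G-v$ or of $G-N[v]$ is also an induced cycle of $G$, so both graphs appearing on the right are again ternary and strictly smaller. The inductive hypothesis therefore applies to each, but it yields only $\abs{I(G;-1)}\leq 2$. All the content of the theorem lies in ruling out the failure of cancellation, i.e.\ in preventing the two terms from being $+1$ and $-1$.

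Before attacking that, I would clear away the easy structure with three reductions. First, $I(\cdot;-1)$ is multiplicative over connected components, so it suffices to treat connected $G$. Second, if $G$ has an isolated vertex then $I(G;-1)=0$ and we are done. Third, and most useful, is a folding reduction: if there are distinct vertices $v,w$ with $N(v)\of N(w)$, then $v\neq w$ and $v\notin N(w)$, so $v$ survives in $G-N[w]$ with all of its neighbors deleted; hence $v$ is isolated there and $I(G-N[w];-1)=0$. The recurrence at $w$ then collapses to $I(G;-1)=I(G-w;-1)$, reducing to the smaller ternary graph $G-w$. Iterating these, I may assume $G$ is connected and \emph{reduced}, in the sense that no open neighborhood is contained in another; in particular one checks that $G$ then has minimum degree at least $2$.

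The main obstacle is the reduced connected case, where no cheap reduction applies and both recurrence terms are genuinely present. Here a naive induction cannot close, and one must either (a) strengthen the inductive statement so that it records enough sign information to force the two children into agreement, or (b) prove a structure theorem for reduced ternary graphs fine enough to always locate a vertex $v$ at which one term vanishes or the two signs coincide. A natural candidate for $v$ is a vertex on a shortest cycle $C$, which is necessarily induced and hence, by ternariness, has length $\not\equiv 0\pmod 3$; since deleting vertices can only destroy induced cycles and never create them, one would then try to track the induced-cycle structure of $G-v$ and $G-N[v]$ modulo $3$. Making this bookkeeping close up is exactly the difficult heart of the argument: it is where the full hypothesis that $G$ is ternary—rather than merely triangle-free—must be used decisively, and it is the step I expect to absorb the bulk of the work and the most delicate case analysis.
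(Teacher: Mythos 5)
There is a genuine gap, and you name it yourself: your final paragraph is a plan of attack, not a proof, and the step it defers is not a technical loose end but the entire theorem. Everything you do verify is correct --- the recurrence $I(G;-1)=I(G-v;-1)-I(G-N[v];-1)$, heredity of ternariness under induced subgraphs, multiplicativity over components, the isolated-vertex case, the folding step when $N(v)\subseteq N(w)$ (your check that $v$ becomes isolated in $G-N[w]$ is sound), and minimum degree at least $2$ after reduction --- but all of it together, even \emph{granting} the theorem for all smaller ternary graphs, yields only $\abs{I(G;-1)}\leq 2$, which cannot sustain the induction; so as written you have proved nothing unconditional at all. The assertion that in a connected, reduced ternary graph the two terms $I(G-v;-1)$ and $I(G-N[v];-1)$ never combine to magnitude $2$ \emph{is} the Kalai--Meshulam conjecture, which stood open for roughly twenty-five years precisely because no local induction of the kind you sketch (pick a vertex on a shortest --- hence induced, hence of length $\not\equiv 0\pmod 3$ --- cycle and track induced cycles modulo $3$) is known to close. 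The proof of Chudnovsky, Scott, Seymour and Spirkl~\cite{CSSS} is a substantial standalone structural argument, not a refinement of the vertex-deletion recurrence, and your alternatives (a) and (b) are exactly the two things that nobody has found a cheap way to do.

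You should also note that the paper itself does not prove this statement: Theorem~\ref{thm:csss} is imported from~\cite{CSSS} and used as a black box, supplying the base case ($\abs{D}=0$) of the induction that establishes the paper's own bound $\abs{I(G;-1)}\leq 2^{\phi_3(G)}$. So there is no internal proof to compare against; the only fair comparison is with the argument in~\cite{CSSS}, and your proposal reproduces only the routine preprocessing that any approach would begin with, while leaving the heart of that theorem untouched.
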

\noindent Various generalizations of this result have since appeared, see, e.g., \cite{E2,Kim}. 

Prior to the proof of the Kalai-Meshulam conjecture, Engstr\"om~\cite{E} had proved a bound on the magnitude of $I(G;-1)$ for any graph $G$ which involved decycling sets.   A \emph{decycling set} of a graph $G$ is a set of vertices $S \subseteq V(G)$ such that $G-S$ is
acyclic.  We denote by $\Phi(G)$ the set of all decycling sets of $G$, and define the \emph{decycling number of a graph $G$}, denoted $\phi(G)$, to be the minimum order of a decycling set of $G$.  With this notation, Engstr\"om's result is as follows.
\begin{theorem}[Engstr\"om \cite{E}]\label{thm:e}
  If $G$ is any graph, then $\abs{I(G;-1)}\leq 2^{\phi(G)}$.
\end{theorem}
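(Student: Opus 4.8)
The plan is to reduce the general bound to the acyclic case via the standard deletion recursion for the independence polynomial together with a decomposition over the decycling set. Recall that for any vertex $v$ one has
\[
  I(G;x)=I(G-v;x)+x\,I(G-N[v];x),
\]
where $N[v]$ is the closed neighborhood of $v$; evaluating at $x=-1$ gives $I(G;-1)=I(G-v;-1)-I(G-N[v];-1)$. The engine of the argument will be the special case in which $G$ is a forest, for which I expect to prove the sharper statement $\abs{I(F;-1)}\leq 1$.

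First I would establish this forest lemma by induction on $\card V(F)$. If $F$ has an isolated vertex $w$, then $N[w]=\set{w}$, so the recursion gives $I(F;x)=(1+x)I(F-w;x)$ and hence $I(F;-1)=0$. Otherwise $F$ has an edge, so pick a leaf $v$ with unique neighbor $u$ and apply the recursion at $u$: deleting $u$ leaves $v$ isolated, so $I(F-u;-1)=0$, whence $I(F;-1)=-I(F-N[u];-1)$. Since $F-N[u]$ is an induced subgraph of the forest $F$, it is again a forest with fewer vertices, and the inductive hypothesis yields $\abs{I(F;-1)}\leq 1$.

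Next I would fix a minimum decycling set $S\in\Phi(G)$ with $\card S=\phi(G)$ and sort the independent sets of $G$ by their trace on $S$. For each independent set $T\of S$, the independent sets $A$ of $G$ with $A\cap S=T$ are exactly the sets $T\cup B$ where $B$ is an independent set of the induced subgraph $H_T:=(G-S)-N(T)$, obtained by deleting from the forest $G-S$ all neighbors of $T$ lying outside $S$. Writing $(-1)^{\abs{A}}=(-1)^{\abs{T}}(-1)^{\abs{B}}$ and summing over this partition gives
\[
  I(G;-1)=\sum_{\substack{T\of S\\ T\text{ independent}}}(-1)^{\abs{T}}\,I(H_T;-1).
\]
Each $H_T$ is an induced subgraph of the forest $G-S$, hence itself a forest, so the forest lemma bounds each summand by $1$ in magnitude; since there are at most $2^{\card S}=2^{\phi(G)}$ choices of $T$, the triangle inequality yields $\abs{I(G;-1)}\leq 2^{\phi(G)}$.

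The routine parts are the recursion and the counting; the point that must be handled carefully is the forest lemma, and in particular the observation that deleting the neighbor of a leaf creates an isolated vertex — this is what collapses the inductive bound from the naive $2$ down to $1$. I would also verify the bookkeeping in the decomposition, namely that $T$ ranges only over \emph{independent} subsets of $S$ and that the correspondence $A\mapsto A\wo T$ faithfully tracks cardinalities, but I anticipate no genuine difficulty there.
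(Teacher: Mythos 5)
Your proof is correct, but it takes a genuinely different route from the paper's. The paper never proves Theorem~\ref{thm:e} directly: it cites Engstr\"om's discrete Morse theory argument and Levit--Mandrescu's elementary one, and the proof it does give (of the stronger bound $\abs{I(G;-1)}\le 2^{\phi_3(G)}$, which implies Theorem~\ref{thm:e} since $\phi_3(G)\le\phi(G)$) is an induction on the size of a decycling set $D$: pick $v\in D$, apply the recursion $I(G;-1)=I(G-v;-1)-I(G-N[v];-1)$, bound both terms by the inductive hypothesis via the triangle inequality, and dispose of the base case $\abs{D}=0$ by quoting a theorem for the base class of graphs (Theorem~\ref{thm:csss} for ternary graphs; in the Engstr\"om setting the base case is exactly your forest lemma). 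You instead fix one minimum decycling set $S$ and make a single partition of all independent sets of $G$ by their trace $T=A\cap S$, confining the deletion recursion to a self-contained proof of the forest bound $\abs{I(F;-1)}\le 1$. Both arguments are sound, and yours buys two things. First, it is fully self-contained, since you prove the base case rather than cite it. Second, your partition proves more than you claim: because the sum runs only over \emph{independent} subsets $T$ of $S$, the triangle inequality actually yields $\abs{I(G;-1)}\le\abs{\Ind(G[S])}$, the number of independent sets of $G[S]$ --- precisely the intermediate refinement $\abs{I(G;-1)}\le\min_{D\in\Phi(G)}\abs{\Ind(G[D])}$ that the paper's inductive proof is designed to record. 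What the paper's formulation buys is that swapping the base class is immediate, which is how it upgrades $\phi$ to $\phi_3$; but your argument generalizes just as easily: replace ``forest'' by ``ternary graph'' throughout (induced subgraphs of ternary graphs are ternary, since an induced cycle of an induced subgraph is an induced cycle of $G$) and bound each summand by Theorem~\ref{thm:csss} instead of the forest lemma, recovering Theorem~\ref{thm:main}.
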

While Engstr\"om's original proof of Theorem \ref{thm:e} used discrete Morse theory, Levit and Mandrescu \cite{LM} later gave an elementary inductive proof using a recursive formula for $I(G;x)$.  Levit and Mandrescu also conjectured that every value in the allowable range given by Theorem~\ref{thm:e} is achievable by some connected graph, which was proved by the first two authors.
\begin{theorem}[Cutler, Kahl~\cite{CK}]\label{thm:ck}
  Given a positive integer $k$ and an integer $q$ with  $\abs{q}\leq2^k$, there is a connected graph $G$ with $\phi(G)=k$ and $I(G;-1) = q$.
\end{theorem}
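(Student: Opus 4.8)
The plan is to prove Theorem~\ref{thm:ck} by an explicit construction. The target is, for each positive integer $k$ and each integer $q$ with $\abs{q}\le 2^k$, to build a \emph{connected} graph $G$ with $\phi(G)=k$ and $I(G;-1)=q$. Let me think about how $I(G;-1)$ behaves under the standard graph operations so I can engineer both the value $q$ and the decycling number $k$ simultaneously.

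First I would recall two multiplicative/recursive tools. If $G$ is the disjoint union $G_1 \cup G_2$, then $I(G;x)=I(G_1;x)I(G_2;x)$, so $I(G;-1)$ is multiplicative over components; and $\phi$ is additive over components. Second, for a single vertex $v$ with neighborhood $N(v)$, the deletion--contraction-type identity $I(G;x)=I(G-v;x)+x\,I(G-N[v];x)$ holds, giving at $x=-1$ the recursion $I(G;-1)=I(G-v;-1)-I(G-N[v];-1)$. These let me assemble a graph with a prescribed value of $I(G;-1)$ from small building blocks. The cleanest basic blocks are: a single edge $K_2$, which has $I(K_2;-1)=1-2=-1$ and $\phi=0$; a triangle $K_3$, which has $I(K_3;-1)=1-3=-2$ and $\phi(K_3)=1$; and isolated vertices, with $I=0$.

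The key steps, in order, are as follows. \textbf{(1) Base value realization.} I would first show how to realize any value $q$ with $\abs q\le 2^k$ using $k$ "cycle-type" gadgets whose product of alternating numbers, combined via the recursion above at a cut vertex or an apex, yields exactly $q$. A natural approach is to take $k$ copies of a building block (each contributing a factor of absolute value $2$, e.g.\ triangles, giving $\phi$ contribution $1$ apiece so that the total decycling number is at most $k$), producing magnitudes that are powers of two, and then \emph{interpolate} the intermediate values of $q$ by attaching pendant edges or paths, each of which multiplies $I(\cdot;-1)$ by $-1$ or adjusts it additively through the vertex recursion. \textbf{(2) Controlling $\phi$ exactly.} Producing a graph with $I(G;-1)=q$ is only half the statement: I must guarantee that the decycling number is \emph{exactly} $k$, not merely at most $k$. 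For the upper bound $\phi(G)\le k$ I would exhibit an explicit decycling set of size $k$ coming from the $k$ gadgets. For the matching lower bound $\phi(G)\ge k$ I would argue that the $k$ cycles in the construction are "independent" in the sense that no single vertex deletion can break two of them at once — for instance by making the cycles vertex-disjoint (or sharing only a designated apex that lies on none of them), so that any decycling set must spend at least one vertex per cycle. \textbf{(3) Connectivity.} Finally I would ensure $G$ is connected by gluing the blocks at a common cut vertex or joining them along a path, checking via the recursion that this gluing does not disturb the computed value of $I(G;-1)$ or the count $\phi(G)=k$.

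The hard part will be \textbf{step (2) together with the simultaneous interpolation in step (1)}: it is easy to hit the extreme values $q=\pm 2^k$ with $k$ disjoint triangles, but hitting \emph{every} intermediate integer $q$ in $[-2^k,2^k]$ while keeping $\phi$ pinned at exactly $k$ and keeping the graph connected requires a careful, likely inductive, construction. I expect the natural route is induction on $k$: assume a connected graph $G_{k-1}$ realizing a given value with decycling number $k-1$, then attach one more gadget (a triangle or a slightly larger cycle) through a single new vertex in a way that (i) multiplies or shifts $I(\cdot;-1)$ by a controlled amount so the reachable set of values at level $k$ is exactly $\{q : \abs q \le 2^k\}$, (ii) raises $\phi$ by exactly one, and (iii) preserves connectivity. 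The delicate bookkeeping is verifying that the recursion $I(G;-1)=I(G-v;-1)-I(G-N[v];-1)$ at the attaching vertex yields the full target range and not just a sparse subset of it; I would manage this by choosing the attachment so that one of the two terms is a free parameter I can set to any value in a suitable sub-range.
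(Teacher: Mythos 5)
Your proposal is an outline, not a proof, and the step it defers is exactly the content of the theorem. (For context: the paper you were asked about does not prove Theorem~\ref{thm:ck} either; it cites the construction from \cite{CK}, remarking only that in that construction all cycles are vertex-disjoint with lengths divisible by $3$ --- so your triangle-gadget skeleton is at least pointed in the same direction.) Your steps (2) and (3) are fine as far as they go: $k$ vertex-disjoint cycles joined by trees force $\phi(G)\ge k$, and one vertex per cycle gives $\phi(G)\le k$. But the theorem is an exact-realization statement --- \emph{every} integer $q$ with $\abs{q}\le 2^k$ must be hit --- and your step (1) ends with ``I expect the natural route is induction on $k$'' and ``the delicate bookkeeping is verifying\dots''. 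That bookkeeping is the theorem; nothing in the proposal shows the reachable set of values at level $k$ is all of $\set{q : \abs{q}\le 2^k}$ rather than a sparse subset.

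Here is the concrete obstruction you wave at but do not resolve. Every connectivity-preserving attachment at a vertex $u$ of $G$ changes $I(G;-1)$ by an amount depending on vertex-deleted subgraphs your induction hypothesis says nothing about. For instance, adding a pendant vertex at $u$ yields $I(G;-1)-I(G-u;-1)$; identifying one vertex of a new triangle with $u$ yields $I(G;-1)-2I(G-u;-1)$; and pasting two graphs at a vertex $v$ gives $I(G-v;-1)I(H-v;-1)-I(G-N_G[v];-1)I(H-N_H[v];-1)$, so even the ``gluing for connectivity'' in your step (3) disturbs the value in a way controlled by these extra quantities. The only attachment whose effect is independent of $G-u$ is the one clean operation available (appending a path of three new vertices at $u$ multiplies $I(\cdot;-1)$ by $-1$), and that can never produce new magnitudes. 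Consequently, an induction whose hypothesis records only $I(G_{k-1};-1)$ cannot close: to make ``one of the two terms a free parameter'' you must strengthen the induction to control a \emph{pair} of values --- say $I(G;-1)$ together with $I(G-u;-1)$ (or $I(G-N[u];-1)$) at a designated root $u$ --- and prove that the full two-dimensional range of pairs needed at level $k$ is realizable, with the root of the new graph again suitable for the next step. That strengthened statement, together with explicit gadgets realizing it, is the actual work of \cite{CK}, and it is absent from your proposal.
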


Given the importance of $\tilde{3}$-cycles to the evaluation of the independence polynomial at $-1$ demonstrated by Theorem \ref{thm:csss}, it seems natural to ask if Engstr\"om's bound in Theorem \ref{thm:e} might be tightened to reflect this.  Recall that the \emph{cyclomatic number of a graph $G$}, denoted $\nu(G)$, is the minimum number of edges $S \subseteq E(G)$ such that $G - S$ is acyclic.  It is well-known that the cyclomatic number is  $\nu(G)=e(G)-n(G)+q(G)$, where $q(G)$ is the number of components in $G$.   In this direction, recently Cao and Ren~\cite{CR} proved the following.
\begin{theorem}[Cao, Ren~\cite{CR}] \label{thm:cr}
  If $G$ is a graph that contains a cycle whose length is not divisible by $3$, then 
  \[
    \abs{I(G;-1)}\leq 2^{\nu(G)}-\nu(G).
  \]
\end{theorem}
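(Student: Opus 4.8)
The plan is to induct on the cyclomatic number $\nu(G)$, using Theorem~\ref{thm:csss} for the base case and the edge-deletion recursion for the inductive step. Since $I(\cdot\,;-1)$ is multiplicative over connected components while $\nu$ is additive over them, I would first reduce to the connected case: if $G$ has components $C_1,\dots,C_m$ with $C_1$ containing a non-$\tilde{3}$-cycle, then $\abs{I(G;-1)}=\prod_i\abs{I(C_i;-1)}$, and combining the connected bound $\abs{I(C_1;-1)}\le 2^{\nu(C_1)}-\nu(C_1)$ with Engstr\"om's bound $\abs{I(C_i;-1)}\le 2^{\nu(C_i)}$ (Theorem~\ref{thm:e}, using $\phi\le\nu$) on the remaining components yields $\abs{I(G;-1)}\le 2^{\nu}-\nu$ via the elementary inequality $a2^{b}\ge a+b$ for integers $a\ge 1$, $b\ge 0$. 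For the base case $\nu(G)=1$ with $G$ connected, $G$ is unicyclic and its unique cycle is chordless, hence induced; since by hypothesis its length is not divisible by $3$, $G$ has no induced $\tilde{3}$-cycle at all, so $G$ is ternary and Theorem~\ref{thm:csss} gives $\abs{I(G;-1)}\le 1=2^{1}-1$.

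For the inductive step, take $G$ connected with $\nu(G)=k\ge 2$ and a non-$\tilde{3}$-cycle $C$, and pick an edge $e=uv$ lying on $C$. The engine is the recursion
\[
  I(G;-1)=I(G-e;-1)-I(G-N[u]-N[v];-1).
\]
Writing $A=G-e$ and $B=G-N[u]-N[v]$, deleting a cycle edge gives $\nu(A)=k-1$, while deleting the vertex $u$---which lies on a cycle---strictly lowers the cyclomatic number, so $\nu(B)\le\nu(G-u)\le k-1$ because $B$ is an induced subgraph of $G-u$. By the triangle inequality $\abs{I(G;-1)}\le\abs{I(A;-1)}+\abs{I(B;-1)}$, I would bound each summand by the inductive hypothesis when that graph still contains a non-$\tilde{3}$-cycle and by Theorem~\ref{thm:e} otherwise. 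The arithmetic closes cleanly in the generic situation: if both $A$ and $B$ retain a non-$\tilde{3}$-cycle, the two bounds of the form $2^{k-1}-(k-1)$ sum to $2^{k}-2k+2\le 2^{k}-k$; and if only one of them does while the other has $\nu\le k-2$, the estimate again drops below $2^{k}-k$.

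The main obstacle is the degenerate case in which deleting $e$ together with the closed neighborhoods $N[u]$ and $N[v]$ destroys every non-$\tilde{3}$-cycle, so that neither $A$ nor $B$ inherits a good cycle and applying Theorem~\ref{thm:e} to both only returns $2^{k}$; a milder but still fatal variant is the ``off-by-one'' case where exactly one reduced graph is good and the other is bad with $\nu=k-1$, overshooting the target by $1$. To control this I would first exploit freedom in the edge choice: if $G$ has two distinct non-$\tilde{3}$-cycles then, since no cycle's edge set properly contains another's, some edge of $C$ is missed by a second good cycle $C'$, so choosing that $e$ keeps $C'$ alive in $A=G-e$. The hard residue is therefore a \emph{unique, fragile} non-$\tilde{3}$-cycle (where every reduction kills it) together with the leftover off-by-one, and I expect the real work to lie in showing these configurations are forced into a very restricted local structure---dispatched either by re-selecting $e$ so that $B$ loses two in cyclomatic number ($\nu(B)\le k-2$), by extracting extra cancellation between $I(A;-1)$ and $I(B;-1)$, or by peeling off the ternary pieces and appealing again to Theorem~\ref{thm:csss}---and in verifying the boundary arithmetic for small $k$.
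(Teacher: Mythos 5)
Since the paper only cites this result from Cao and Ren~\cite{CR} and contains no proof of it, your argument has to be judged on its own merits, and on those merits it is incomplete. What you do carry out is correct: multiplicativity of $I(\cdot\,;-1)$ over components together with the inequality $a2^b\ge a+b$ does reduce the problem to connected graphs; the base case is right (a connected unicyclic graph whose cycle has length not divisible by $3$ has no $\tilde{3}$-cycle at all, so Theorem~\ref{thm:csss} applies); the edge recursion $I(G;-1)=I(G-e;-1)-I(G-(N[u]\cup N[v]);-1)$ is valid; the estimates $\nu(G-e)=k-1$ and $\nu(G-(N[u]\cup N[v]))\le k-1$ are correct; and the arithmetic closes in your ``generic'' cases. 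The problem is that the degenerate cases you flag at the end --- where $G-e$, or $G-(N[u]\cup N[v])$, or both, still contain cycles but no longer contain a cycle of length not divisible by $3$ --- are not a residue to be mopped up later: they are precisely where the difficulty of the theorem sits, and for them you offer only three undeveloped suggestions. As it stands this is a genuine gap, not a deferred verification.

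Moreover, the gap cannot be closed by your first suggestion (re-choosing $e$), as the following example shows. Let $G$ consist of a $4$-cycle $abcd$ and a $6$-cycle joined by a single edge at $a$, so $\nu(G)=2$ and the target bound is $2^2-2=2$; a short computation gives $I(G;-1)=-2$, so the bound is tight and a lossy estimate cannot close the induction. The unique cycle of length not divisible by $3$ is the $4$-cycle, so $e=uv$ must be one of its four edges. For every such choice, $G-e$ contains the $6$-cycle and no other cycle, so the inductive hypothesis does not apply to it, and your only fallback, Theorem~\ref{thm:e} with $\phi\le\nu$, gives $\abs{I(G-e;-1)}\le 2$; meanwhile $G-(N[u]\cup N[v])$ is either the $6$-cycle (bound $2$) or a $5$-vertex path (bound $1$). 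Thus your case analysis returns at best $2+1=3>2$ no matter which admissible edge you pick. The true quantities do satisfy $\abs{I(G-e;-1)}+\abs{I(G-(N[u]\cup N[v]);-1)}=2$ for each choice --- for instance, when $e=ab$ the graph $G-e$ is a $6$-cycle with a pendant four-vertex path and $I(G-e;-1)=-1$, the pendant path having pulled $\abs{I}$ strictly below Engstr\"om's bound --- but none of the theorems you invoke can detect this. A sharpened bound for graphs all of whose cycles have length divisible by $3$ (one sensitive to the trees hanging off those cycles), or else a cancellation argument between the two terms of the recursion, is exactly the missing content, and it is the heart of Cao and Ren's theorem rather than a boundary case of it.
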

\noindent Theorem \ref{thm:cr} actually represents an improvement on an earlier bound of 
Levit and Mandrescu~\cite{LM2} (which omitted the last term), and does represent an improvement of Engstr\"om's bound for those graphs $G$ for which $\phi(G)=\nu(G)$.  However, Theorem \ref{thm:cr} explicitly does not pertain to graphs with $\tilde{3}$-cycles, and even in graphs with $\tilde{3}$-cycles it is easy to see that in general $\phi(G) \le \nu(G)$, and the difference between $\phi(G)$ and $\nu(G)$ can be arbitrarily large.  Hence the question still remains: what is a natural, best possible bound for $I(G;-1)$ on general graphs that takes $\tilde{3}$-cycles into account?  In this short note, we answer this question.

Call a set $D \subseteq V(G)$ a \emph{$\tilde{3}$-decycling set of $G$} if $G-D$ is ternary, i.e., contains no induced $\tilde{3}$-cycles.  Let the \emph{ternary decycling number of a graph $G$}, denoted $\phi_3(G)$, be the minimum size of a $\tilde{3}$-decycling set of $G$, and let $\Phi_3(G)$ denote the set of all such decycling sets.    We prove the following.

\begin{theorem}\label{thm:main}
  If $G$ is any graph, then
  \[
    \abs{I(G;-1)}\leq 2^{\phi_3(G)}.
  \]
\end{theorem}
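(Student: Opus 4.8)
The plan is to prove the bound by induction on $\phi_3(G)$, using the standard vertex-deletion recursion for the independence polynomial. For any vertex $v$, every independent set either avoids $v$ or contains $v$ (in which case it avoids all neighbors of $v$), so $I(G;x) = I(G-v;x) + x\,I(G-N[v];x)$, where $N[v]$ denotes the closed neighborhood of $v$. Evaluating at $x=-1$ gives
\[
  I(G;-1) = I(G-v;-1) - I(G-N[v];-1),
\]
which is exactly the tool Levit and Mandrescu used in their elementary reproof of Engstr\"om's bound. The triangle inequality then reduces the problem to controlling $\phi_3$ on the two smaller graphs $G-v$ and $G-N[v]$.

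For the base case $\phi_3(G)=0$, the graph $G$ is itself ternary, and Theorem~\ref{thm:csss} gives $\abs{I(G;-1)}\le 1 = 2^0$ immediately. This is the crucial place where the deep result on ternary graphs enters; everything else is bookkeeping.

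For the inductive step, fix a minimum $\tilde{3}$-decycling set $D$ with $\abs{D}=\phi_3(G)\ge 1$ and choose any $v\in D$. I would claim that both $\phi_3(G-v)\le\phi_3(G)-1$ and $\phi_3(G-N[v])\le\phi_3(G)-1$. For $G-v$ this is clear, since $(G-v)-(D\setminus\{v\}) = G-D$ is ternary, so $D\setminus\{v\}$ is a $\tilde{3}$-decycling set of $G-v$ of size $\phi_3(G)-1$. For $G-N[v]$ the key observation is that the ternary property is hereditary under induced subgraphs: since a $\tilde{3}$-cycle is by definition an \emph{induced} cycle, any induced subgraph of a ternary graph is again ternary. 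Hence $G-(N[v]\cup D)$, being an induced subgraph of the ternary graph $G-D$, is ternary, so $D\setminus N[v]$ is a $\tilde{3}$-decycling set of $G-N[v]$; since $v\in N[v]\cap D$ this set has size at most $\phi_3(G)-1$. Applying the inductive hypothesis to both terms and using the recursion yields
\[
  \abs{I(G;-1)} \le \abs{I(G-v;-1)} + \abs{I(G-N[v];-1)} \le 2^{\phi_3(G)-1} + 2^{\phi_3(G)-1} = 2^{\phi_3(G)},
\]
as desired.

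The only step requiring genuine care is the $G-N[v]$ term: as in the ordinary decycling argument, one needs both branches of the recursion to strictly decrease the relevant parameter, and this hinges on the fact that deleting the extra vertices of $N[v]$ cannot destroy the ternary property of $G-D$. Because ``ternary'' is defined via induced cycles, it is automatically closed under vertex deletion, so this step is if anything cleaner than its analogue for forests. I therefore do not anticipate a real obstacle — the substance of the theorem has been absorbed into the base case, where Theorem~\ref{thm:csss} does the heavy lifting.
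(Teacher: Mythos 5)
Your proposal is correct and follows essentially the same route as the paper: the vertex-deletion recursion $I(G;-1)=I(G-v;-1)-I(G-N[v];-1)$ applied at a vertex $v$ of a minimum $\tilde{3}$-decycling set, the triangle inequality, the observation that the ternary property is hereditary (so both $D\setminus\{v\}$ and $D\setminus N[v]$ remain $\tilde{3}$-decycling sets of the smaller graphs), and Theorem~\ref{thm:csss} as the base case. The only difference is cosmetic: the paper inducts on $\abs{D}$ and keeps track of the sharper intermediate quantity $\abs{\Ind(G[D])}$ (the number of independent sets of $G[D]$), which your direct bound of $2^{\phi_3(G)-1}+2^{\phi_3(G)-1}$ collapses immediately, but the statement in question follows equally well either way.
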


The bound above is best possible.  In fact we can show more, demonstrating that a theorem analogous to Theorem \ref{thm:ck} holds for this new bound as well.

\section{Results}\label{sec:results}

The proof of our result follows similar lines as Levit and Mandrescu's proof of Engstr\"om's original bound.  We will actually prove the following chain of inequalities.  For a set $D\of V(G)$, we let $G[D]$ be the subgraph of $G$ induced by the vertices of $D$.

\begin{theorem}
  If $G$ is any graph, then
  \[
    \abs{I(G;-1)}\leq \min_{D \in \Phi_3(G)} \abs{\Ind(G[D])} \leq 2^{\phi_3(G)}.
  \]
\end{theorem}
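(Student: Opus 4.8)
The plan is to prove the two inequalities separately; the right-hand one is immediate, and the left-hand one carries all the content. For the right-hand inequality, I would fix a minimum $\tilde{3}$-decycling set $D$, so that $\abs{D}=\phi_3(G)$. Interpreting $\abs{\Ind(G[D])}$ as the number of faces of the independence complex of $G[D]$, that is, the number of independent sets of $G[D]$ (with the empty set counted), and noting that every such set is a subset of $D$, we get $\abs{\Ind(G[D])}\le 2^{\abs{D}}=2^{\phi_3(G)}$. Since the minimum over $D\in\Phi_3(G)$ is at most the value at this particular choice of $D$, the right-hand inequality follows.

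For the left-hand inequality I would fix an arbitrary $D\in\Phi_3(G)$ and sort the independent sets of $G$ according to their trace on $D$. Every independent set $S$ of $G$ decomposes uniquely as $S=T\cup S'$, where $T=S\cap D$ is an independent set of $G[D]$ and $S'=S\wo D$ is an independent set of $G$ disjoint from $D$ and avoiding $N(T)$. Conversely, for a fixed independent set $T$ of $G[D]$, the sets $S'$ that may be adjoined are exactly the independent sets of the induced subgraph $H_T\coloneqq G[(V(G)\wo D)\wo N(T)]$. Since $\abs{S}=\abs{T}+\abs{S'}$, grouping the defining alternating sum by $T$ gives
\[
  I(G;-1)=\sum_{T}(-1)^{\abs{T}}\,I(H_T;-1),
\]
where $T$ ranges over the independent sets of $G[D]$.

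The key observation, and the point I would be most careful to get right, is that each $H_T$ is ternary. Indeed, $H_T$ is an induced subgraph of $G-D$, and $G-D$ is ternary by the choice of $D$, so any induced $\tilde{3}$-cycle in $H_T$ would already be an induced $\tilde{3}$-cycle in $G-D$. Theorem \ref{thm:csss} then gives $\abs{I(H_T;-1)}\le 1$ for every such $T$. Applying the triangle inequality to the displayed identity yields
\[
  \abs{I(G;-1)}\le\sum_{T}\abs{I(H_T;-1)}\le\sum_{T}1,
\]
and the final sum is exactly the number of independent sets of $G[D]$, namely $\abs{\Ind(G[D])}$. As $D\in\Phi_3(G)$ was arbitrary, taking the minimum completes the left-hand inequality. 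The main obstacle is purely one of verification: confirming that the decomposition $S=T\cup S'$ is a genuine bijection, so that no independent set of $G$ is omitted or double-counted, and confirming that passing to the induced subgraph $H_T$ preserves ternariness so that Theorem \ref{thm:csss} legitimately applies. Everything else is bookkeeping with the alternating sum. (A fully inductive version, deleting the vertices of $D$ one at a time via the recursion $I(G;-1)=I(G-v;-1)-I(G-N[v];-1)$ as in Levit and Mandrescu's argument, would give the same bound, but the grouped-sum formulation above makes the role of Theorem \ref{thm:csss} most transparent.)
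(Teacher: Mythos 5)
Your proof is correct, but it takes a genuinely different route from the paper's. The paper follows Levit and Mandrescu's template: fix $D\in\Phi_3(G)$ and induct on $\abs{D}$, deleting one vertex $v\in D$ at a time via the recursion $I(G;-1)=I(G-v;-1)-I(G-N[v];-1)$, invoking Theorem~\ref{thm:csss} only in the base case $\abs{D}=0$ and the triangle inequality at each inductive step. Your argument is the closed form of that induction: partitioning the independent sets of $G$ by their trace $T=S\cap D$ gives the exact identity $I(G;-1)=\sum_{T\in\Ind(G[D])}(-1)^{\abs{T}}I(H_T;-1)$ with $H_T=(G-D)-N(T)$, each $H_T$ is ternary because ternariness passes to induced subgraphs, and Theorem~\ref{thm:csss} is applied once per term. (Unrolling the paper's recursion tree over the vertices of $D$ produces exactly your sum, with leaves indexed by $\Ind(G[D])$, so the two proofs have the same mathematical content.) What your packaging buys: the quantity $\abs{\Ind(G[D])}$ appears directly as the number of terms in the sum, so no bookkeeping is needed about how $D$ shrinks under deletion of $v$ or $N[v]$ --- a point where the paper's write-up is in fact careless, since its final display keeps the index set $D$ instead of passing to $D\wo\set{v}$ and $D\wo N[v]$, and it leaves implicit the identity $\abs{\Ind((G-v)[D\wo\set{v}])}+\abs{\Ind((G-N[v])[D\wo N[v]])}=\abs{\Ind(G[D])}$, which requires splitting the faces of $\Ind(G[D])$ according to whether they contain $v$. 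Your version also makes the equality analysis transparent: the bound is attained exactly when every $I(H_T;-1)$ is nonzero and the signed terms agree. What the inductive version buys is brevity and alignment with the existing literature; both proofs rest on the same two pillars, the Chudnovsky--Scott--Seymour--Spirkl theorem and the restriction-to-$D$ decomposition.
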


\begin{proof}
We will require the following recursive formula for $I(G;x)$ (see, e.g., \cite{gh83,hl94}), namely that 
    \[
      I(G;x)=I(G-v;x)+xI(G-N[v];x),
    \]
where $N[v]=\setof{u\in V(G)}{uv\in E(G)}\cup \set{v}$ is the closed neighborhood of $v$.  Thus, setting $x=-1$, we have
    \[{}
      I(G;-1)=I(G-v;-1)-I(G-N[v];-1).
    \] 
 
To prove the first inequality of the bound, we show that $\abs{I(G;-1)} \le \abs{\Ind(G[D])}$ for any $D \in \Phi_3(G)$ by induction on the order of $D$.  For the base case, if $|D|=0$ then $G$ is ternary and so by Theorem~\ref{thm:csss} we have $\abs{I(G;-1)} \le 1$.  Thus it suffices to show that for $G$ ternary there is no $D \in \Phi_3(G)$ such that $\Ind(G[D])$ is empty.   But the null set $\emptyset \in \Ind(G[D])$ for any $D \in \Phi(G)$, and hence $\abs{\Ind(G[D])} \ge 1$.   

Now assume $k\geq 0$ and that the first inequality is true for all $D \in \Phi_3(G)$ with $|D|\leq k$.  If $G$ is a graph with $D \in \Phi(G)$ such that $|D|=k+1$, then there is a vertex $v\in V(G)$ such that $D-v$ is a $3$-decycling set of $G-v$ and $D-N[v]$ is a $3$-decycling set of $G-N[v]$, and clearly $|D-v| \le k$ and $|D-N[v]| \le k$.  Thus,
    \begin{align*}
      \abs{I(G;-1)}&= \abs{I(G-v;-1)-I(G-N[v];-1)}\\
      &\leq \abs{I(G-v;-1)}+\abs{I(G-N[v];-1)}\\
      &\leq \abs{\Ind(G-v)[D]}+\abs{\Ind((G-N[v])[D]}\\
      &=\abs{\Ind(G[D])}.
    \end{align*}
This proves the first inequality.  The second inequality now follows by noting that $\abs{\Ind(G[D])} \le 2^{|D|}$ for any $D \subseteq V(G)$.  Letting $D$ be any minimum decycling set produces the second inequality, and completes the proof.  \qedhere
\end{proof}

The bound $\abs{I(G;-1)} \le 2^{\phi_3(G)}$ is not only tight, but a ``density'' result analogous to Theorem~\ref{thm:ck} is true for it as well.  In fact, since clearly $\phi_3(G)\leq \phi(G)$, the result below follows directly from  the proof of Theorem~\ref{thm:ck}.  It is easily seen there that in the constructions given all cycles are disjoint and all cycles have length divisible by $3$ and thus for those graphs we have $\phi(G)=\phi_3(G)$.  The following is an immediate consequence.

\begin{corollary}
    Given a positive integer $k$ and an integer $q$ with  $\abs{q}\leq 2^k$, there is a connected graph $G$ with $\phi_3(G)=k$ and $I(G;-1)=q$. 
\end{corollary}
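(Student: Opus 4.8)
The plan is to deduce the corollary from Theorem~\ref{thm:ck} together with the elementary observation that $\phi_3(G)\le\phi(G)$ for every graph $G$. Indeed, if $S$ is any decycling set of $G$, then $G-S$ is acyclic and hence contains no cycles whatsoever, in particular no induced $\tilde{3}$-cycles; thus every decycling set is a $\tilde{3}$-decycling set, giving $\Phi(G)\of\Phi_3(G)$ and therefore $\phi_3(G)\le\phi(G)$. This inequality is the conceptual hinge: it lets me reuse the graphs already built for the classical bound, provided I can certify that passing from $\phi$ to $\phi_3$ loses nothing on those particular graphs.

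First I would apply Theorem~\ref{thm:ck} to obtain, for the given $k$ and $q$ with $\abs{q}\le 2^k$, a connected graph $G$ satisfying $\phi(G)=k$ and $I(G;-1)=q$. The value $I(G;-1)=q$ is then immediate and requires nothing further. By the inequality above we already have $\phi_3(G)\le\phi(G)=k$, so the entire task reduces to proving the reverse bound $\phi_3(G)\ge k$; combining the two will pin down $\phi_3(G)=k$ exactly.

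To establish $\phi_3(G)\ge k$ I would inspect the explicit graphs produced in the proof of Theorem~\ref{thm:ck}. The structural feature to extract is that each such $G$ contains $k$ pairwise vertex-disjoint induced cycles, every one of length divisible by $3$, i.e.\ $k$ disjoint induced $\tilde{3}$-cycles. Granting this, let $D\in\Phi_3(G)$. If $D$ were disjoint from one of these cycles, that cycle would survive as an induced $\tilde{3}$-cycle in $G-D$, contradicting that $G-D$ is ternary; hence $D$ must meet each of the $k$ cycles, and since they are vertex-disjoint this forces $\abs{D}\ge k$. Thus $\phi_3(G)\ge k$, and together with $\phi_3(G)\le k$ we conclude $\phi_3(G)=k$.

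The main obstacle is the structural verification in the last step: one must return to the constructions underlying Theorem~\ref{thm:ck} and confirm that they are assembled from $k$ disjoint \emph{induced} cycles whose lengths are all divisible by $3$. This is precisely the ``easily seen there'' claim, and once the relevant construction is recalled it is a routine check of the cycle structure rather than a new computation; with it in hand the two inequalities $\phi_3(G)\le\phi(G)=k$ and $\phi_3(G)\ge k$ coincide, completing the proof.
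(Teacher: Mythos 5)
Your proposal is correct and follows essentially the same route as the paper: invoke Theorem~\ref{thm:ck} to get a connected $G$ with $\phi(G)=k$ and $I(G;-1)=q$, use $\phi_3(G)\le\phi(G)$, and then observe that the constructions in that proof consist of disjoint induced cycles of length divisible by $3$, forcing $\phi_3(G)=\phi(G)=k$. Your write-up merely makes explicit the hitting-set argument that the paper compresses into ``for those graphs we have $\phi(G)=\phi_3(G)$.''
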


\bibliographystyle{amsplain}
\bibliography{ternary}

\end{document}